\documentclass[11pt,reqno]{amsart}

\usepackage{amsmath,amssymb,amsthm,mathtools}
\usepackage[margin=1.1in]{geometry}
\usepackage{microtype}
\usepackage{enumitem}
\usepackage[colorlinks=true,linkcolor=blue,citecolor=blue,urlcolor=blue]{hyperref}

\theoremstyle{plain}
\newtheorem{theorem}{Theorem}[section]
\newtheorem{proposition}[theorem]{Proposition}
\newtheorem{lemma}[theorem]{Lemma}
\newtheorem{corollary}[theorem]{Corollary}

\theoremstyle{definition}

\newtheorem{remark}[theorem]{Remark}

\newcommand{\R}{\mathbb{R}}
\newcommand{\Sph}{S^{n-1}}
\newcommand{\Rad}{\mathcal{R}}
\newcommand{\IB}{\mathcal{I}}
\newcommand{\Hd}[1]{\mathcal{H}_{#1}}
\newcommand{\norm}[1]{\left\lVert #1 \right\rVert}
\newcommand{\abs}[1]{\left\lvert #1 \right\rvert}
\newcommand{\ip}[2]{\langle #1, #2 \rangle}
\DeclareMathOperator{\dist}{dist}
\DeclareMathOperator{\tr}{tr}

\begin{document}

\title[Quantitative stability of the intersection body operator near the ball]
{Quantitative stability of the intersection body operator near the ball,
and the dynamical origin of the two--dimensional degeneracy}

\author[S.~Spektor]{S. Spektor}
\address{S.~Spektor\\ School of Data, Computing and Mathematics,
Canisius University, 2001 Main Street, Buffalo, NY, 14208-1098}
\email{spektors@canisius.edu}

\date{\today}

\subjclass[2020]{Primary 52A20, 52A38; Secondary 44A12, 37D10.}

\keywords{Intersection body, Funk transform, Busemann--Petty problem,
spherical harmonics, quantitative stability, center manifold.}

\begin{abstract}
Let $\IB$ denote the intersection body operator on star bodies in $\R^n$.
A recent theorem of Milman, Shabelman and Yehudayoff establishes that for
$n\ge 3$ the equation $\IB^2 K = cK$ holds if and only if $K$ is a centered
ellipsoid, thereby resolving the fixed--point problem for $\IB^2$ and, as a
consequence, the long--standing conjecture $\IB K = cK \Leftrightarrow K$
is a ball. We complement this qualitative rigidity with a \emph{quantitative}
analysis in a neighbourhood of the ball. Linearizing the associated
shape dynamics on $L^2(\Sph)$, we compute the full spectrum of the
operator $\IB^2$ at the ball in closed form for every dimension: the
degree--two (ellipsoidal) harmonics are neutral with multiplier exactly
$1$, while all higher harmonics are contracted, with a sharp spectral gap
\[
   \mathrm{gap}(n)\;=\;\frac{(n-2)(n+4)}{(n+1)^2}.
\]
This yields an explicit linear stability constant
$C(n)=(n+1)^2/\big((n-2)(n+4)\big)$, and, via a center--manifold reduction,
a local quantitative stability statement for $\IB^2$ near the ball valid in
each fixed dimension $n\ge 3$. The gap degenerates precisely as $n\to 2^+$,
giving a transparent \emph{dynamical} explanation of the well--known
exceptional status of the plane, where $\IB K = 2K$ for every origin--symmetric
star body. We also record the reduced normal form of $\IB$ on the ellipsoidal
directions and observe that the centered ellipsoids constitute a normally
attracting invariant manifold for the shape under iterated intersection
bodies. The methods are perturbative and do not address the global periodic
problem $\IB^m K = cK$ for $m\ge 3$, which we discuss.
\end{abstract}

\maketitle

\section{Introduction}

\subsection{Background}
For a star body $K\subset\R^n$ (a compact set, star--shaped about the origin,
with continuous positive radial function $\rho_K(u)=\max\{t\ge 0: tu\in K\}$,
$u\in\Sph$), the \emph{intersection body} $\IB K$ is the star body whose radial
function is
\[
   \rho_{\IB K}(u) \;=\; \mathrm{vol}_{n-1}\big(K\cap u^{\perp}\big),
   \qquad u\in\Sph .
\]
Intersection bodies were introduced by Lutwak and play a central role in the
solution of the Busemann--Petty problem
\cite{Lutwak1988,Gardner1994,GKS1999,Koldobsky2005}. In terms of radial
functions the operator is, up to a dimensional constant, the composition of the
$(n-1)$--st power map with the \emph{spherical Radon (Funk) transform} $\Rad$,
\begin{equation}\label{eq:IB-radial}
   \rho_{\IB K} \;=\; \frac{\abs{S^{n-2}}}{n-1}\,\Rad\!\left(\rho_K^{\,n-1}\right),
\end{equation}
where $(\Rad f)(u)$ is the average of $f$ over the great subsphere
$\Sph\cap u^{\perp}$ and $\abs{S^{n-2}}$ is the surface area of the unit
$(n-2)$--sphere (for the unit ball this recovers
$\rho_{\IB B}\equiv\kappa_{n-1}$, the volume of the unit $(n-1)$--ball); see
Section~\ref{sec:prelim}.

A natural and much--studied question concerns the fixed points and periodic
points of $\IB$ acting on the projective space of star bodies (that is, modulo
dilation). The conjecture that
\begin{equation}\label{eq:main-conj}
   \IB K = cK \quad\text{for some } c>0 \qquad\Longleftrightarrow\qquad
   K \text{ is a centered Euclidean ball},
\end{equation}
was raised in connection with the classification of self--dual bodies and the
dynamics of $\IB$; a perturbative version near the ball was established by
Fish, Nazarov, Ryabogin and Zvavitch \cite{FNRZ2011}, who proved that any
star body sufficiently close to the ball (in a suitable norm) satisfying
$\IB K = cK$ is a ball. The perturbative approach proceeds through the Fourier
analysis of the Funk transform and, as noted in \cite{FNRZ2011}, does not
extend to bodies far from the ball.

The full conjecture \eqref{eq:main-conj}, together with the stronger
statement that $\IB^2 K = cK$ characterizes ellipsoids, was recently
established by Milman, Shabelman and Yehudayoff \cite{MSY2025}:

\begin{theorem}[\cite{MSY2025}]\label{thm:MSY}
Let $n\ge 3$. Then $\IB^2 K = cK$ for some $c>0$ if and only if $K$ is a
centered ellipsoid. Consequently $\IB K = cK$ for some $c>0$ if and only if
$K$ is a centered Euclidean ball.
\end{theorem}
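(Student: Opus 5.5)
The ``if'' direction rests on the affine equivariance of $\IB$, and the converse is where the real work lies. For $A\in GL(n)$, the change of variables $x\mapsto Ax$ in $(AK)\cap u^\perp$ together with \eqref{eq:IB-radial} gives
\[
  \IB(AK)=\abs{\det A}\,A^{-T}\,\IB K ,
\]
which reflects the fact that $A$ maps $(A^{T}u)^\perp$ onto $u^\perp$. Since $\IB B=\kappa_{n-1}B$, a centered ellipsoid $E=AB$ satisfies $\IB E=\kappa_{n-1}\abs{\det A}\,A^{-T}B$. Applying the formula once more gives $\IB^2E=c\,AB=cE$. In particular a ball is a fixed point of $\IB$.

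For the converse I would extract integral identities from the self-adjointness of $\Rad$ on $L^2(\Sph)$. Writing the first dual mixed volume as $\widetilde V_1(L,M)=\frac1n\int_{\Sph}\rho_L^{\,n-1}\rho_M$, formula \eqref{eq:IB-radial} gives the symmetry $\widetilde V_1(L,\IB K)=\widetilde V_1(K,\IB L)$. Taking $L=\IB K$ and using $\IB^2K=cK$ yields $\mathrm{vol}(\IB K)=c\,\mathrm{vol}(K)$, and then $\mathrm{vol}(\IB^2K)=c^n\mathrm{vol}(K)$. Two upper bounds follow:

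\begin{itemize}[leftmargin=*]
\item Busemann's intersection inequality, $\mathrm{vol}(\IB K)\le C_n\,\mathrm{vol}(K)^{n-1}$, holds with equality exactly for centered ellipsoids. Applied to $K$ and to $\IB K$, it bounds $c$ from above in terms of $\mathrm{vol}(K)$.
\item The dual mixed volume inequality $\widetilde V_1(K,M)^n\le \mathrm{vol}(K)^{n-1}\mathrm{vol}(M)$, applied with $M=\IB^2K$, is an equality only when $M$ is a dilate of $K$. Thus it is automatically tight for our $K$ and carries no information by itself.
\end{itemize}

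The strategy is then to find a \emph{matching lower bound} for $c$, or for $\mathrm{vol}(\IB K)/\mathrm{vol}(K)^{n-1}$, that holds for every solution of $\IB^2K=cK$. This would force equality in Busemann's inequality and hence make $K$ an ellipsoid. The statement about $\IB K=cK$ then follows at once: $K$ is an ellipsoid $AB$ with $\IB K=c\,\abs{\det A}^{-1}\cdots A^{-T}B$ proportional to $AB$, which forces $A^TA$ to be scalar, so $K$ is a ball.

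The main obstacle is this reverse inequality, since every standard inequality in sight points the wrong way. My first attempt would be to normalize $K$ by a linear map to an extremal position, using the equivariance above, for instance the position maximizing $\mathrm{vol}(\IB K)$ under fixed volume. In that position the first-order optimality conditions, combined with the self-adjointness identity, should give a second relation between $c$ and $\mathrm{vol}(K)$. If that fails, I would fall back on a monotonicity argument for the iteration $K\mapsto\IB^2K$ modulo $GL(n)$, aiming to show that the Busemann ratio strictly increases along the iteration unless $K$ is an ellipsoid. A fixed point of $\IB^2$ would then have to be an equality case.
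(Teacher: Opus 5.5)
\textbf{Verdict: the ``if'' direction is correct, but the converse is not proved, and neither of your two fallback routes can close it.}

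The equivariance $\IB(AK)=\abs{\det A}\,A^{-T}\IB K$ checks out and gives $\IB^2E=cE$ for $E=AB$. Your deduction of the ball statement is also fine: it forces $(AA^T)^2$, hence $AA^T$, to be scalar. The identities $\widetilde V_1(L,\IB K)=\widetilde V_1(K,\IB L)$ and $\mathrm{vol}(\IB K)=c\,\mathrm{vol}(K)$ are correct too.

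The converse, however, is the whole content of the theorem. Since the only equality cases of Busemann's inequality are centered ellipsoids, ``every solution of $\IB^2K=cK$ attains equality in Busemann's inequality'' just restates the claim, and you leave that step open. Your fallbacks do not supply it:
\begin{enumerate}[label=(\roman*)]
\item By your own equivariance formula, $\mathrm{vol}(\IB(AK))=\abs{\det A}^{n-1}\mathrm{vol}(\IB K)$. So the ratio $\Phi(K):=\mathrm{vol}(\IB K)/\mathrm{vol}(K)^{n-1}$ is $GL(n)$--invariant. Every linear position of fixed volume is equally ``extremal,'' and there are no optimality conditions to extract.
\item The monotonicity route is circular. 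Your identities already give $\mathrm{vol}(\IB K)^n=\widetilde V_1(K,\IB^2K)^n\le\mathrm{vol}(K)^{n-1}\mathrm{vol}(\IB^2K)$, i.e.\ $\Phi(\IB K)\ge\Phi(K)$. Equality holds exactly when $\IB^2K$ is a dilate of $K$. So ``strict increase unless $K$ is an ellipsoid'' is equivalent to the theorem itself.
\end{enumerate}

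The missing idea is to read the equation variationally rather than look for a global reverse inequality. Along $\rho_K+tf$, self-adjointness of $\Rad$ gives the first variation of $\mathrm{vol}(\IB K)$ as $(n-1)\int_{\Sph}\rho_{\IB^2K}\,\rho_K^{n-2}f$. The first variation of $\mathrm{vol}(K)$ is $\int_{\Sph}\rho_K^{n-1}f$. Hence $\IB^2K=cK$ is precisely the Euler--Lagrange equation of $\Phi$. Solutions are critical points of $\Phi$, not a priori maximizers, which is why every inequality you reach for points the wrong way.

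The paper does not reprove the theorem; it cites \cite{MSY2025} and even uses it as an input in Proposition~\ref{prop:center-mfld}. According to the paper's description, \cite{MSY2025} exploits exactly this reformulation:
\begin{enumerate}[label=(\alph*)]
\item it tests the Euler--Lagrange condition against a continuous Steiner symmetrization, made admissible as a variation for Lipschitz star bodies;
\item it analyzes the resulting equality case through a local characterization of ellipsoids;
\item the hypothesis $n\ge3$ enters through the fact that the cube is not contained in twice the cross-polytope.
\end{enumerate}
Some such variation beyond the $GL(n)$ orbit is indispensable, since $\Phi$ is constant on that orbit. Your proposal has nothing in its place.
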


Their proof is global and geometric: it reformulates the equation as the
Euler--Lagrange condition of a volume functional, introduces a continuous
Steiner symmetrization for Lipschitz star bodies as an admissible variation,
and analyzes the resulting equality case through a local characterization of
ellipsoids. Crucially, the hypothesis $n\ge 3$ enters through the combinatorial
fact that the cube is not contained in twice the cross--polytope in dimension
$\ge 3$; the two--dimensional case is genuinely exceptional, since in the plane
$\IB K = 2K$ for \emph{every} origin--symmetric star body. The lower--order
intersection body operators $\IB_i$ were subsequently treated by Lin and Xiong
\cite{LinXiong2025}.

Theorem~\ref{thm:MSY} settles the fixed--point problem for $\IB^2$
\emph{qualitatively}. It leaves open two natural quantitative questions.
First, is the characterization \emph{stable}: if $\IB^2 K$ is merely close to
$cK$, must $K$ be close to an ellipsoid, and at what rate? Second, what is the
local \emph{dynamics} of the iteration $K\mapsto \IB K$ near the ball, and how
does the exceptional status of $n=2$ manifest analytically? The present note
answers these questions in a neighbourhood of the ball, in every dimension,
with explicit constants.

\subsection{Results}
Fix $n\ge 3$ and work with radial functions normalized so that the unit ball
$B$ has $\rho_B\equiv 1$. Writing $\rho = 1+\phi$ and linearizing the shape map
induced by \eqref{eq:IB-radial} at the ball, one obtains a bounded self--adjoint
operator on $L^2(\Sph)$ that is diagonalized by spherical harmonics. Our first
result computes its spectrum in closed form.

\begin{theorem}[Spectrum at the ball]\label{thm:spectrum}
Let $n\ge 3$. The linearization $L$ of the intersection body shape map at the
ball acts on the space $\Hd d$ of degree--$d$ spherical harmonics as
multiplication by $(n-1)\mu_d$, where $\mu_d$ is the Funk multiplier
\eqref{eq:funk-eig}. Explicitly, $(n-1)\mu_d=0$ for odd $d$, and for even
$d=2m$,
\begin{equation}\label{eq:L-mult}
   (n-1)\mu_{2m}
   \;=\;(-1)^m\,\frac{(n-1)\,(2m)!\,\Gamma\!\big(\tfrac{n-2}{2}+m\big)\Gamma(n-2)}
   {m!\,\Gamma\!\big(\tfrac{n-2}{2}\big)\,\Gamma(n-2+2m)} .
\end{equation}
In particular $(n-1)\mu_0 = n-1$, $(n-1)\mu_2 = -1$, and
$(n-1)\mu_4 = 3/(n+1)$. Consequently the linearization $L^2$ of $\IB^2$ at the
ball has multiplier
\[
   \sigma_d \;=\; \big((n-1)\mu_d\big)^2
\]
on $\Hd d$, with $\sigma_0=(n-1)^2>1$ (the dilation mode, expanding, removed by
the projective quotient of Section~\ref{sec:prelim}), $\sigma_2 = 1$ (the
neutral ellipsoidal directions), and $0\le \sigma_d < 1$ for all $d\ge 4$. The
sequence $(\sigma_d)_{d\ge 4}$ is strictly decreasing, so its maximum is
attained at $d=4$, giving the spectral gap
\begin{equation}\label{eq:gap}
   \mathrm{gap}(n)\;:=\;1-\max_{d\ge 4}\sigma_d
   \;=\;1-\frac{9}{(n+1)^2}\;=\;\frac{(n-2)(n+4)}{(n+1)^2}.
\end{equation}
\end{theorem}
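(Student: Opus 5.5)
Linearize first, then diagonalize with the Funk--Hecke theorem, and finally compare consecutive ratios of the multipliers. Write $\rho_K=1+\phi$ with $\rho_B\equiv 1$, and absorb the constant in \eqref{eq:IB-radial} into the projective normalization, so that the shape map is $\rho\mapsto \Rad(\rho^{n-1})/\Rad(1)=\Rad(\rho^{n-1})$. Since $(1+\phi)^{n-1}=1+(n-1)\phi+O(\phi^2)$ and $\Rad$ is bounded and linear on $L^2(\Sph)$, the Fr\'echet derivative at $\phi=0$ is $L=(n-1)\Rad$. The Funk transform is an average over great subspheres, i.e.\ a zonal integral operator, so the Funk--Hecke theorem applies. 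It shows that $\Rad$ acts on $\Hd d$ as the scalar $\mu_d=P_d^{(n)}(0)$, where $P_d^{(n)}$ is the Legendre (Gegenbauer $C_d^{(n-2)/2}$, normalized by $P_d(1)=1$) polynomial in dimension $n$. This is the multiplier \eqref{eq:funk-eig} I am entitled to assume. Self-adjointness follows because the kernel is symmetric.

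Next I would evaluate $P_d^{(n)}(0)$ explicitly. For odd $d$ the polynomial is odd, so $\mu_d=0$. For $d=2m$, set $\lambda=(n-2)/2$ and use $C^\lambda_{2m}(0)=(-1)^m\Gamma(\lambda+m)/(m!\,\Gamma(\lambda))$ together with $C^\lambda_{2m}(1)=\Gamma(2\lambda+2m)/((2m)!\,\Gamma(2\lambda))$. Taking the quotient and multiplying by $n-1$ gives \eqref{eq:L-mult}. Substituting $m=0,1,2$ then produces the special values: $\mu_2=-\lambda/(2\lambda+1)=-1/(n-1)$, and $\mu_4=3\lambda(\lambda+1)/((2\lambda+1)(2\lambda+3)\cdot 1)$, which simplifies to $3/((n-1)(n+1))$. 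Because $L^2$ is the derivative of $\IB^2$ by the chain rule at the fixed ray, it acts on $\Hd d$ by $\sigma_d=((n-1)\mu_d)^2$.

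The step I expect to carry the real content is strict monotonicity of $\sigma_{2m}$ for $m\ge 2$. For this I would form the ratio
\[
\frac{|\mu_{2m+2}|}{|\mu_{2m}|}=\frac{(2m+1)(2m+2)(\lambda+m)}{(m+1)(2\lambda+2m)(2\lambda+2m+1)}=\frac{2m+1}{2\lambda+2m+1},
\]
which is strictly less than $1$ whenever $\lambda>0$, i.e.\ $n\ge3$. So $|\mu_{2m}|$ is strictly decreasing in $m$, and the maximum of $\sigma_d$ over $d\ge4$ is $\sigma_4=9/(n+1)^2$. This is less than $1$ exactly when $n>2$. The odd degrees have $\sigma_d=0$, so they do not affect the maximum. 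Finally, the algebra $1-9/(n+1)^2=((n+1)^2-9)/(n+1)^2=(n-2)(n+4)/(n+1)^2$ gives \eqref{eq:gap}.

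One subtlety remains: the statement says $\sigma_d$ for $d\ge 4$ is strictly decreasing including the odd zeros. That literally fails, since zeros are interspersed among the even terms. I would therefore interpret the claim as monotonicity along the even degrees, with the odd multipliers identically zero, and state it that way in the proof.
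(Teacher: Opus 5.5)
Your proposal is correct and follows the paper's proof essentially step for step: you get $L=(n-1)\Rad$ from the binomial expansion, evaluate the Gegenbauer values at $0$ and $1$, and use the consecutive ratio $\abs{\mu_{2m+2}}/\abs{\mu_{2m}}=(2m+1)/(n+2m-1)<1$ for $n>2$. Your reading of ``strictly decreasing'' as monotonicity along the even degrees is exactly what the paper's argument actually establishes. One small slip: your intermediate forms $-\lambda/(2\lambda+1)$ and $3\lambda(\lambda+1)/((2\lambda+1)(2\lambda+3))$ each fail to cancel the factor $\lambda$, resp.\ $\lambda(\lambda+1)$, coming from $C^\lambda_{2m}(1)$; the correct forms are $-1/(2\lambda+1)$ and $3/((2\lambda+1)(2\lambda+3))$, which give the final values $-1/(n-1)$ and $3/((n-1)(n+1))$ that you state.
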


Two features of \eqref{eq:L-mult}--\eqref{eq:gap} deserve emphasis. First, the
multiplier on the ellipsoidal directions $\Hd 2$ equals $-1$ \emph{in every
dimension}: the degree--two harmonics are always neutral of flip type, so the
period--doubling structure underlying Theorem~\ref{thm:MSY} ($\IB^2$ fixes
ellipsoids while $\IB$ does not) is dimension--independent. Second, the gap
\eqref{eq:gap} is positive for every $n\ge 3$ and vanishes precisely as
$n\to 2^+$. The exceptional status of the plane is thus visible already at the
linear level, as a loss of hyperbolicity of the shape dynamics off the
ellipsoidal directions.

The spectral gap controls the non--ellipsoidal component of a perturbation and
produces an explicit stability constant. Let $\Pi_{\ge 4}$ denote the
orthogonal projection of $L^2(\Sph)$ onto $\bigoplus_{d\ge 4}\Hd d$, and let
$C(n) := 1/\mathrm{gap}(n)$.

\begin{theorem}[Linear stability constant]\label{thm:linear-stab}
Let $n\ge 3$ and $C(n)=(n+1)^2/\big((n-2)(n+4)\big)$. For every
$\psi\in L^2(\Sph)$,
\[
   \norm{\Pi_{\ge 4}\psi}_{L^2}\;\le\; C(n)\,\norm{(L^2-\mathrm{Id})\psi}_{L^2}.
\]
The constant $C(n)$ is sharp for the linear operator; it satisfies
$C(n)\downarrow 1$ as $n\to\infty$ and $C(n)\uparrow\infty$ as $n\to 2^+$.
\end{theorem}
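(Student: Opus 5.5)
The plan is to reduce everything to a statement about Fourier multipliers on the spherical harmonic decomposition, using Theorem~\ref{thm:spectrum}. Write $\psi=\sum_{d\ge 0}\psi_d$ with $\psi_d\in\Hd d$, which converges in $L^2(\Sph)$ and is orthogonal. By Theorem~\ref{thm:spectrum}, $L$ acts on $\Hd d$ as multiplication by $(n-1)\mu_d$. Hence $L^2-\mathrm{Id}$ acts on $\Hd d$ as multiplication by $\sigma_d-1$. Since $L$ is bounded, this holds on all of $L^2(\Sph)$ and not only on finite sums. Parseval then gives
\[
\norm{(L^2-\mathrm{Id})\psi}_{L^2}^2=\sum_{d\ge0}(1-\sigma_d)^2\norm{\psi_d}^2\ \ge\ \sum_{d\ge 4}(1-\sigma_d)^2\norm{\psi_d}^2 .
\]
Here we simply discard the nonnegative terms with $d\le 3$.

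The key step is the uniform lower bound $1-\sigma_d\ge \mathrm{gap}(n)>0$ for every $d\ge4$, including odd $d$. For odd $d$ we have $\sigma_d=0$, so $1-\sigma_d=1\ge\mathrm{gap}(n)$. The inequality $\mathrm{gap}(n)\le 1$ holds because $\mathrm{gap}(n)=1-9/(n+1)^2$. For even $d\ge4$, Theorem~\ref{thm:spectrum} gives $0\le\sigma_d\le\sigma_4=9/(n+1)^2<1$. This uses the strict decrease of $(\sigma_d)_{d\ge4}$, which in the statement of that theorem is asserted over all $d\ge 4$ and in any case is only needed along even $d$. Therefore $1-\sigma_d\ge 1-\sigma_4=\mathrm{gap}(n)$. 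Inserting this into the display yields $\norm{(L^2-\mathrm{Id})\psi}^2\ge\mathrm{gap}(n)^2\norm{\Pi_{\ge4}\psi}^2$. Taking square roots gives the inequality with $C(n)=1/\mathrm{gap}(n)$.

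For sharpness, take $\psi\in\Hd 4$ nonzero. Then $\Pi_{\ge4}\psi=\psi$ and $(L^2-\mathrm{Id})\psi=(\sigma_4-1)\psi$, so equality holds. No smaller constant can therefore work.

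For the asymptotics, compute $(n+1)^2-(n-2)(n+4)=9$, so
\[
C(n)=1+\frac{9}{(n-2)(n+4)} .
\]
Since $(n-2)(n+4)$ is positive and increasing for $n>2$, $C(n)$ decreases strictly to $1$ as $n\to\infty$. For the same reason it increases to $+\infty$ as $n\to2^+$, where we regard $C$ as a function of a real parameter $n>2$ as the statement intends.

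The only point needing care, which I expect to be the main obstacle, is that the bound uses the full monotonicity and nonnegativity information on $\sigma_d$ from Theorem~\ref{thm:spectrum}, not just the value $\sigma_4$. I take that as given from the earlier result. Everything else is Parseval plus bookkeeping.
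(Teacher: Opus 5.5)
Your proof is correct and follows the paper's argument: harmonic decomposition, Parseval, the uniform bound $1-\sigma_d\ge\mathrm{gap}(n)$ for $d\ge4$, and sharpness via $\Hd{4}$. It is slightly more careful than the paper's write-up in three places: you keep an inequality when discarding the $d\le3$ terms, you treat odd $d$ separately (the strict decrease of $\sigma_d$ holds only along even $d$), and you note that equality is attained exactly on $\Hd{4}$, not merely in a limit.
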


Passing from the linear statement to a genuine stability theorem for $\IB^2$
requires controlling the nonlinearity of \eqref{eq:IB-radial}. This forces the
choice of a functional--analytic setting in which harmonic projections are
bounded (to use the spectral gap) \emph{and} products are controlled (to bound
the nonlinearity); as explained in Section~\ref{sec:nonlinear}, the sup--norm
space fails the first requirement and $L^2$ the second, and the correct choice
is the Sobolev space $X=H^s(\Sph)$ with $s>\tfrac{n-1}{2}$. In this setting the
ellipsoids fill out the local center manifold of $\IB^2$ at the ball
(Proposition~\ref{prop:center-mfld}), and a standard invariant--manifold
reduction yields the following.

\begin{theorem}[Local quantitative stability]\label{thm:nonlinear}
Let $n\ge 3$ and $s>\tfrac{n-1}{2}$. There exist constants $r_n>0$ and
$A_n<\infty$, depending only on $n$ and $s$, with the following property. If $K$
is a star body with $\norm{\rho_K-1}_{H^s(\Sph)}\le r_n$ and
\[
   \Big\|\,c^{-1}\rho_{\IB^2 K}-\rho_K\,\Big\|_{H^s(\Sph)}\;\le\;\delta
   \qquad\text{for some } c>0,
\]
then there is a centered ellipsoid $E$ with
\[
   \dist(K,E)\;:=\;\inf_{\lambda>0}\norm{\rho_K-\lambda\rho_E}_{H^s(\Sph)}
   \;\le\; A_n\,\delta .
\]
Moreover $A_n\le 2\,C(n)$, and $A_n\to C(n)$ as the admissible radius
$r_n\to0$; thus the linear constant of Theorem~\ref{thm:linear-stab} governs
the stability rate. (The relative normalization of the defect is the natural
one, since $\IB^2$ is homogeneous of degree $(n-1)^2$; an absolute defect
$\norm{\rho_{\IB^2K}-c\rho_K}\le\delta$ yields the same conclusion with
$\delta$ replaced by $\delta/c$.)
\end{theorem}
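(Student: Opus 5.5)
We reduce Theorem~\ref{thm:nonlinear} to the linear bound of Theorem~\ref{thm:linear-stab} by splitting the perturbation into an ellipsoidal part and a transverse part. The ellipsoidal part we absorb into the choice of $E$. The transverse part we control by the spectral gap, with the nonlinearity treated as a small perturbation in the algebra $X=H^s(\Sph)$, $s>\tfrac{n-1}{2}$.

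\emph{Setup.} Let $F(\phi)$ denote the normalized shape map of $\IB^2$ acting on $\phi=\rho-1$, rescaled projectively so that the dilation mode is removed. Concretely, we fix the normalization by requiring the $\Hd 0$ component to vanish; the $\inf_{\lambda>0}$ in $\dist$ and the free constant $c$ absorb this. Write $\rho_K = 1+\phi$. Because $H^s$ is a Banach algebra, the maps $\phi\mapsto(1+\phi)^{n-1}$ and $\Rad$ are smooth on a small ball of $X$. Moreover $\Rad$ commutes with harmonic projections, and these are bounded on $H^s$. Hence $F$ is $C^2$ near $0$ and satisfies
\[
F(\phi)=L^2\phi+N(\phi),\qquad \norm{N(\phi)}_X\le M_n\norm{\phi}_X^2 .
\]
By Proposition~\ref{prop:center-mfld}, the ellipsoids near $B$ form a smooth manifold $\mathcal E$. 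It is a graph $\eta\mapsto \eta+h(\eta)$ over $\Hd 2$ (modulo $\Hd 0$), with $h(0)=0$, $Dh(0)=0$, and $F=\mathrm{Id}$ on $\mathcal E$ projectively.

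\emph{Choice of $E$.} Decompose $\phi=\Pi_2\phi+\Pi_{0}\phi+\Pi_{\ge4}\phi$; odd harmonics are annihilated by $L$. We first dispose of the odd part. The defect hypothesis gives $\norm{\Pi_{\rm odd}((F-\mathrm{Id})\phi)}\le\delta$, and since $L$ kills odd modes this yields $\norm{\Pi_{\rm odd}\phi}\le\delta+M_n\norm\phi^2$. Next set $\eta=\Pi_2\phi$ and take $E$ with $\rho_E=1+\eta+h(\eta)$. Let $\psi=\phi-(\eta+h(\eta))$. Then $\Pi_2\psi=0$, and $\Pi_{\ge4}\psi$ differs from $\Pi_{\ge4}\phi$ only by $O(\norm\eta^2)$.

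\emph{Main estimate.} Since $E$ is a fixed point, we can write
\[
(F-\mathrm{Id})(\phi)=(F-\mathrm{Id})(\phi)-(F-\mathrm{Id})(\rho_E-1)=(L^2-\mathrm{Id})\psi+R,
\]
with $\norm R_X\le M_n'(\norm\phi_X+\norm\eta_X)\norm\psi_X$ by the mean value theorem applied to $DN$. This is the key point: the remainder is linear in $\psi$ with a small coefficient, not quadratic in $\phi$. Applying Theorem~\ref{thm:linear-stab} on the $H^s$ scale gives
\[
\norm{\psi}_X\le C(n)\big(\delta+M_n'' r_n\norm\psi_X\big),
\]
provided two facts hold. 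First, the linear estimate transfers to $H^s$; it does, because $L^2$ is a Fourier multiplier and so the gap bound holds degree by degree with weights $(1+d(d+n-2))^{s/2}$. Second, $\psi$ has no $\Hd2$ component and its $\Hd0$ part is fixed by the projective normalization. With $r_n$ small enough that $C(n)M_n''r_n\le\tfrac12$ we obtain $\norm\psi_X\le 2C(n)\delta$, hence $A_n\le 2C(n)$. Letting $r_n\to0$ gives $A_n\to C(n)$.

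\emph{Main obstacle.} The delicate step is making the remainder genuinely of the form $o(1)\cdot\norm\psi$. This needs a $C^1$ estimate on $N$ in $H^s$ uniform on the $r_n$-ball, which is where $s>\tfrac{n-1}{2}$ is essential. It also needs the fixed-point identity on $\mathcal E$ together with the smooth graph structure, so that no term of size $\norm\eta^2$ survives. We must also check that the projective normalization, via the choice of $c$ and $\lambda$, does not create a $\Hd0$ error. We would handle this by choosing $\lambda$ to match the mean of $\rho_K$, and by noting that the relative defect already fixes $c$ up to $O(\delta)$.
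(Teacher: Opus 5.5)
Your proposal is correct and follows essentially the same route as the paper. Your $E$ (the point of the ellipsoid graph lying over $\Pi_2\phi$) and your displacement $\psi=\phi-(\eta+h(\eta))$ are exactly the paper's base point and graph coordinate $\eta=\psi_s-h(\psi_c)$. Your mean--value bound on $N(\phi)-N(\rho_E-1)$, obtained from $(F-\mathrm{Id})(\rho_E-1)=0$, is the concrete content of the paper's assertion that $R(\psi_c,\eta)$ has no $\eta$--independent term. Both arguments then invert $L^2-\mathrm{Id}$ off $\Hd0\oplus\Hd2$ with constant $C(n)$ and absorb the remainder for small $r_n$.

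Your explicit handling of the odd modes (which the paper's splitting $X_0\oplus X_c\oplus X_s$ omits for $d=1,3$) and of the arbitrary constant $c$ is, if anything, more careful than the paper's. One correction: $\Rad$ annihilates odd functions exactly, so the odd part of the defect gives $\norm{\Pi_{\mathrm{odd}}\phi}_X\le\delta$ up to the scale normalization, with no $M_n\norm{\phi}_X^2$ term.
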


We prove Theorem~\ref{thm:nonlinear} for each fixed dimension. The linear
constant $C(n)$ is explicit and sharp; the admissible radius $r_n$ produced by
our argument, by contrast, \emph{decreases} with $n$, reflecting the growth of
both the binomial nonlinearity $\binom{n-1}{2}\sim n^2/2$ in
\eqref{eq:IB-radial} and the Sobolev algebra constant of $H^s(\Sph)$. We do not
obtain a uniform--in--$n$ statement, and we explain in Section~\ref{sec:open}
why the obstruction appears intrinsic to the method.

Finally, the linear spectrum has a dynamical corollary. Since every direction
transverse to the ellipsoids is strictly contracted while the ellipsoidal
directions are neutral and, by Theorem~\ref{thm:MSY}, consist of genuine
fixed points of the shape of $\IB^2$, the family of centered ellipsoids is a
normally attracting invariant manifold for the shape under iterated
intersection bodies.

\begin{corollary}[Ellipsoidal attractor]\label{cor:attractor}
Let $n\ge 3$ and $s>\tfrac{n-1}{2}$. In an $H^s(\Sph)$--neighbourhood of the
ball, the family of centered ellipsoids is locally invariant and normally
attracting for the shape dynamics $K\mapsto \IB K$; the non--ellipsoidal
component of the shape contracts geometrically with ratio at most
$3/(n+1)+o(1)$ per application of $\IB$.
\end{corollary}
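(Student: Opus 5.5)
We prove Corollary~\ref{cor:attractor} by combining the spectral information of Theorem~\ref{thm:spectrum} with the center--manifold description of Proposition~\ref{prop:center-mfld}, working in $X=H^s(\Sph)$ with $s>\tfrac{n-1}{2}$. Let $S$ be the normalized shape map $\rho\mapsto \rho_{\IB K}/\overline{\rho_{\IB K}}$, with the bar denoting the $\Hd 0$ mean or any other smooth normalization killing the dilation mode. Since $H^s$ is a Banach algebra and $\Rad$ is a bounded Fourier multiplier on $H^s$, \eqref{eq:IB-radial} makes $S$ a $C^2$ (indeed analytic) map near $\rho\equiv 1$. Its derivative at the ball is the operator $L$ restricted to $\bigoplus_{d\ge 2}\Hd d$. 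We split $X_0=\Hd 2\oplus Y$ with $Y=\bigoplus_{d\ge 4}\Hd d\oplus\bigoplus_{d\ \mathrm{odd}}\Hd d$. On $\Hd 2$ the derivative is $-\mathrm{Id}$. On $Y$, by \eqref{eq:L-mult} and the monotonicity in Theorem~\ref{thm:spectrum}, its norm is $\max_{d\ge4}|(n-1)\mu_d|=3/(n+1)<1$. The projections onto these summands commute with the multiplier and are bounded on $H^s$, so this operator-norm bound holds in $X$ and not only in $L^2$.

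\emph{Invariance.} Centered ellipsoids are mapped to centered ellipsoids by $\IB$. This follows from the linear covariance $\IB(TK)=|\det T|\,T^{-*}\IB K$ together with $\IB B=\kappa_{n-1}B$. After normalization, the ellipsoids near the ball form a smooth $\dim\Hd 2$-dimensional submanifold $\mathcal{E}\subset X_0$, tangent to $\Hd 2$ at the ball and invariant under $S$. Because $S$ is smooth, $\mathcal{E}$ is a graph $\eta\mapsto \eta+g(\eta)$ over a neighbourhood of $0$ in $\Hd 2$, with $g:\Hd 2\to Y$ and $g(\eta)=O(\|\eta\|^2)$. This gives local invariance, and it agrees with the center manifold of Proposition~\ref{prop:center-mfld}.

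\emph{Normal attraction.} Write a point near the ball as $\rho=\eta+g(\eta)+y$ with $y\in Y$, which the implicit function theorem allows. Decompose $S(\rho)=\eta'+g(\eta')+y'$. We aim to prove
\[
\|y'\|_{X}\le\big(\tfrac{3}{(n+1)}+C\epsilon\big)\|y\|_X
\]
whenever $\|\rho-1\|_X\le\epsilon$. To do this, compare $S(\rho)$ with $S(\eta+g(\eta))$, which lies on $\mathcal{E}$. By the mean value theorem their difference equals $DS(\eta+g(\eta))\,y$, up to an error bounded by $C\|y\|^2$. Since $DS$ is Lipschitz near the ball, $DS(\eta+g(\eta))=DS(1)+O(\epsilon)$. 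Re-projecting onto $Y$ along the graph coordinates changes the result only by $O(\epsilon)\|y\|$, because $Dg=O(\epsilon)$. Hence the $Y$-component contracts at rate $3/(n+1)+O(\epsilon)$, which is $3/(n+1)+o(1)$ as the neighbourhood shrinks. Iterating, and using that the $\Hd 2$-component only moves along the neutral direction and stays in the neighbourhood for the relevant steps, the distance to $\mathcal{E}$ decays geometrically.

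The main obstacle is this last step: making the transverse coordinate estimate uniform while the $\Hd 2$ coordinate drifts. The multiplier $-1$ means the base point can oscillate, since $\IB$ swaps an ellipsoid with its polar-type image. The trajectory therefore has to stay inside the neighbourhood, which the contraction estimate requires. To handle this, we first use that points of $\mathcal{E}$ are period-two points of $S$ (Theorem~\ref{thm:MSY}), so the base dynamics is an isometry-like flip that does not escape. Second, the shift in the base point is bounded by $C\|y\|$ at each step. Summing the geometric series then shows the orbit stays within $\epsilon$ if it starts within $c\epsilon$.
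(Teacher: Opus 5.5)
Your proposal is correct and follows essentially the paper's route: a graph coordinate over the ellipsoid family (the center manifold of the single-step shape map), whose transverse linear part has $H^s$ operator norm $3/(n+1)$, perturbed by $O(\epsilon)$ through a Taylor expansion of $S$ and the smallness of $Dg$. You in fact supply what the paper's one-paragraph proof leaves implicit, with only cosmetic slips (for instance $DS(1)$ acts on $\bigoplus_{d\ge 1}\Hd d$ rather than $\bigoplus_{d\ge 2}\Hd d$, and you reuse $X_0$, which the paper reserves for $\Hd 0$): single-step invariance of the ellipsoids from the covariance $\IB(\phi K)=\abs{\det\phi}\,\phi^{-t}\IB K$ (so Theorem~\ref{thm:MSY} is not needed), odd harmonics in the transverse space (including $\Hd 1$ and $\Hd 3$, which the paper's $X_s$ omits), and confinement of orbits despite the $-1$ flip on $\Hd 2$, using that $S\circ S=\mathrm{Id}$ on the ellipsoids.
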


\subsection{Relation to previous work and scope}
Theorem~\ref{thm:nonlinear} is, to our knowledge, the first quantitative
stability statement for the intersection--body characterization of ellipsoids.
It is genuinely a stability result and does not follow from the qualitative
rigidity of Theorem~\ref{thm:MSY} together with a compactness argument, since
such arguments yield no rate; the explicit rate $C(n)$ is the point. The
near--ball rigidity itself (the case $\delta=0$) recovers, by a different and
arguably more transparent route, the perturbative theorem of
\cite{FNRZ2011}. We make no claim on the global periodic problem
$\IB^m K = cK$ for $m\ge 3$, which remains open and appears to require
non--perturbative, geometric methods in the spirit of \cite{MSY2025}; we
explain in Section~\ref{sec:open} precisely where the perturbative method
reaches its ceiling. The value of the present contribution is the explicit,
dimension--uniform \emph{linear} theory --- in particular the closed form
\eqref{eq:gap} and the pole of $C(n)$ at $n=2$ --- and the quantitative local
consequences it entails.

\medskip
\noindent\textbf{Organization.} Section~\ref{sec:prelim} fixes conventions and
records the Funk multipliers. Section~\ref{sec:linear} proves
Theorems~\ref{thm:spectrum} and \ref{thm:linear-stab}.
Section~\ref{sec:nonlinear} sets up the Banach--algebra framework, the
center--manifold reduction, and proves Theorem~\ref{thm:nonlinear} and
Corollary~\ref{cor:attractor}. Section~\ref{sec:normalform} records the reduced
normal form on the ellipsoidal directions and its cubic expansion.
Section~\ref{sec:open} discusses limitations and open problems.

\section{Preliminaries}\label{sec:prelim}

\subsection{The Funk transform and its multipliers}
For $f\in C(\Sph)$ the spherical Radon (Funk) transform is
\[
   (\Rad f)(u)\;=\;\frac{1}{\abs{\Sph\cap u^{\perp}}}
   \int_{\Sph\cap u^{\perp}} f(v)\,dv,\qquad u\in\Sph,
\]
normalized so that $\Rad\mathbf 1=\mathbf 1$. The operator $\Rad$ is a bounded,
self--adjoint operator on $L^2(\Sph)$ and, by positivity, a contraction on
$C(\Sph)$: $\norm{\Rad f}_{C}\le\norm{f}_{C}$. It is diagonalized by spherical
harmonics: for $Y\in\Hd d$,
\begin{equation}\label{eq:funk-eig}
   \Rad Y \;=\; \mu_d\, Y,\qquad
   \mu_d \;=\; \frac{C_d^{\lambda}(0)}{C_d^{\lambda}(1)},\quad \lambda=\tfrac{n-2}{2},
\end{equation}
where $C_d^{\lambda}$ is the Gegenbauer polynomial of degree $d$ and index
$\lambda$ (see \cite[Ch.~3]{Koldobsky2005}, \cite{Rubin2002}). Since
$C_d^\lambda(0)=0$ for odd $d$, we have $\mu_d=0$ for odd $d$; for even
$d=2m$, using $C_{2m}^\lambda(0)=(-1)^m (\lambda)_m/m!$ and
$C_{2m}^\lambda(1)=(2\lambda)_{2m}/(2m)!$ with $(a)_k=\Gamma(a+k)/\Gamma(a)$,
one obtains the closed form
\begin{equation}\label{eq:mu2m}
   \mu_{2m}\;=\;(-1)^m\,\frac{(2m)!\,(\lambda)_m}{m!\,(2\lambda)_{2m}}
   \;=\;(-1)^m\,\frac{(2m)!\,\Gamma\!\big(\tfrac{n-2}{2}+m\big)\Gamma(n-2)}
   {m!\,\Gamma\!\big(\tfrac{n-2}{2}\big)\,\Gamma(n-2+2m)} .
\end{equation}
The first values are $\mu_0=1$, $\mu_2=-\tfrac{1}{n-1}$,
$\mu_4=\tfrac{3}{n^2-1}$, $\mu_6=-\tfrac{15}{(n-1)(n+1)(n+3)}$.

\subsection{The shape map and its normalization}
By \eqref{eq:IB-radial}, $\IB$ acts on radial functions by
$\rho\mapsto (n-1)^{-1}\Rad(\rho^{n-1})$. Define the (unnormalized) operator
\[
   T(\rho)\;:=\;\Rad\big(\rho^{\,n-1}\big).
\]
The constant function $\rho\equiv 1$ satisfies $T(1)=1$, and the equation
$\IB K = cK$ is $T(\rho)=c'\rho$ with $c'=(n-1)c$. Because we are interested in
bodies only up to dilation, we regard $T$ as a map on the projective space of
positive radial functions; concretely we quotient by the one--dimensional
group of dilations, which acts on the tangent space at $\rho\equiv1$ as
translation in the degree--zero (constant) mode $\Hd 0$. Throughout, ``shape
map'' refers to $T$ read modulo $\Hd 0$, and stability/attraction statements
are made in the quotient.

\begin{lemma}[Linearization]\label{lem:lin}
Write $\rho=1+\psi$. Then $T(1+\psi)=1+L\psi+N(\psi)$ where
$L=(n-1)\Rad$ and $N(\psi)=\Rad\big((1+\psi)^{n-1}-1-(n-1)\psi\big)$
satisfies $N(0)=0$, $DN(0)=0$. In particular $L$ acts on $\Hd d$ as
multiplication by $(n-1)\mu_d$.
\end{lemma}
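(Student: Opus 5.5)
The plan is a first-order Taylor expansion of the power map, followed by the linearity and spectral properties of $\Rad$ recorded in \eqref{eq:funk-eig}. Because $T(\rho)=\Rad(\rho^{n-1})$ is the composition of a pointwise nonlinearity with a linear operator, the whole argument reduces to expanding $(1+\psi)^{n-1}$ pointwise.

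\emph{The identity.} Write
\[
(1+\psi)^{n-1}=1+(n-1)\psi+R(\psi),\qquad R(\psi):=(1+\psi)^{n-1}-1-(n-1)\psi .
\]
Applying $\Rad$ and using linearity together with $\Rad\mathbf 1=\mathbf 1$ gives
\[
T(1+\psi)=1+(n-1)\Rad\psi+\Rad R(\psi).
\]
This is exactly the claimed decomposition with $L=(n-1)\Rad$ and $N=\Rad\circ R$. It is an algebraic identity, valid whenever $1+\psi>0$ and $\psi$ is continuous, so that $\rho=1+\psi$ is a genuine radial function.

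\emph{The conditions on $N$.} Clearly $R(0)=0$, so $N(0)=0$. For $DN(0)=0$, expand by the binomial theorem: since $n-1$ is a positive integer,
\[
R(\psi)=\sum_{k=2}^{n-1}\binom{n-1}{k}\psi^k .
\]
On $C(\Sph)$ this gives $\norm{R(\psi)}_C\le C_n\norm{\psi}_C^2$ for $\norm{\psi}_C\le1$. Since $\Rad$ is a contraction on $C(\Sph)$, it follows that $\norm{N(\psi)}_C=o(\norm{\psi}_C)$, which is the Fréchet statement $DN(0)=0$ in the sup norm.

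\emph{Choice of space.} If one wants the derivative statement in $H^s$ with $s>\tfrac{n-1}2$ instead, the same argument works using the Banach algebra property $\norm{fg}_{H^s}\lesssim\norm f_{H^s}\norm g_{H^s}$, together with boundedness of $\Rad$ on $H^s$. The latter holds because $\Rad$ is a Fourier multiplier with $|\mu_d|\le1$. The only real point to watch is this choice of ambient space. I would state the lemma in $C(\Sph)$, where it is immediate, and note the $H^s$ version for use in Section~\ref{sec:nonlinear}.

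\emph{Spectral claim.} Finally, for $Y\in\Hd d$, \eqref{eq:funk-eig} gives
\[
LY=(n-1)\Rad Y=(n-1)\mu_d Y,
\]
which is the last assertion of the lemma.
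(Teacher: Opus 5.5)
Your proof is correct and follows the paper's argument: expand $(1+\psi)^{n-1}$, apply the linear operator $\Rad$ using $\Rad\mathbf 1=\mathbf 1$, bound the binomial remainder quadratically, and read off the multiplier from \eqref{eq:funk-eig}. The only difference is emphasis: the paper proves $DN(0)=0$ directly in $X=H^s(\Sph)$ via the algebra-based bound of Lemma~\ref{lem:nl-bound}, which is the version used in Section~\ref{sec:nonlinear}, whereas you lead with the sup-norm estimate and give the $H^s$ case, by the same binomial-plus-algebra reasoning, as a remark.
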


\begin{proof}
Expand $(1+\psi)^{n-1}=1+(n-1)\psi+\big[(1+\psi)^{n-1}-1-(n-1)\psi\big]$ and
apply the linear operator $\Rad$, using $\Rad\mathbf 1=\mathbf 1$. The
bracketed term is $O(\psi^2)$ in $X=H^s(\Sph)$ (Lemma~\ref{lem:nl-bound}), so
$DN(0)=0$. The action of $L$ follows from \eqref{eq:funk-eig}.
\end{proof}

\section{The linear theory}\label{sec:linear}

\begin{proof}[Proof of Theorem~\ref{thm:spectrum}]
By Lemma~\ref{lem:lin}, $L$ has multiplier $(n-1)\mu_d$ on $\Hd d$, and
\eqref{eq:L-mult} is \eqref{eq:mu2m} multiplied by $(n-1)$. Evaluating,
$(n-1)\mu_0=n-1$, $(n-1)\mu_2=(n-1)\cdot(-\tfrac1{n-1})=-1$, and
$(n-1)\mu_4=(n-1)\cdot\tfrac{3}{n^2-1}=\tfrac{3}{n+1}$. Squaring gives the
multipliers $\sigma_d=((n-1)\mu_d)^2$ of $L^2$, with $\sigma_0=(n-1)^2$,
$\sigma_2=1$ and $\sigma_4=9/(n+1)^2$. The mode $\Hd0$ is the dilation
direction, expanding under $T$ because $T$ is homogeneous of degree $n-1$; it is
removed by passing to the projective quotient (Section~\ref{sec:prelim}), and
all stability statements are made there. On the quotient the relevant spectrum
is $\{\sigma_2=1\}\cup\{\sigma_d:d\ge4\}$.

It remains to show $(\sigma_d)_{d\ge4}$ is strictly decreasing. From
\eqref{eq:mu2m},
\[
   \frac{(n-1)\mu_{2m+2}}{(n-1)\mu_{2m}}
   \;=\;-\,\frac{(2m+2)(2m+1)}{(2\lambda+2m)(2\lambda+2m+1)}\cdot
        \frac{\lambda+m}{m+1}
   \;=\;-\,\frac{2m+1}{\,n+2m-1\,},
\]
after simplification with $2\lambda=n-2$. Hence
\[
   \frac{\sigma_{2m+2}}{\sigma_{2m}}\;=\;\left(\frac{2m+1}{n+2m-1}\right)^{2}
   \;<\;1\qquad\text{for all } n\ge 3,\ m\ge 1,
\]
since $2m+1<n+2m-1\Leftrightarrow n>2$. Thus $\max_{d\ge4}\sigma_d=\sigma_4$,
and
\[
   \mathrm{gap}(n)=1-\frac{9}{(n+1)^2}=\frac{(n+1)^2-9}{(n+1)^2}
   =\frac{(n-2)(n+4)}{(n+1)^2},
\]
which is \eqref{eq:gap}.
\end{proof}

\begin{proof}[Proof of Theorem~\ref{thm:linear-stab}]
On $\Hd d$ with $d\ge4$, $L^2-\mathrm{Id}$ acts as multiplication by
$\sigma_d-1$, with $\abs{\sigma_d-1}\ge \mathrm{gap}(n)>0$. Hence for
$\psi=\sum_{d}\psi_d$ (harmonic decomposition),
\[
   \norm{(L^2-\mathrm{Id})\psi}_{L^2}^2
   =\sum_{d\ge4}(\sigma_d-1)^2\norm{\psi_d}_{L^2}^2
   \ \ge\ \mathrm{gap}(n)^2\sum_{d\ge4}\norm{\psi_d}_{L^2}^2
   =\mathrm{gap}(n)^2\,\norm{\Pi_{\ge4}\psi}_{L^2}^2 .
\]
Taking square roots and dividing by $\mathrm{gap}(n)$ gives the claim with
$C(n)=1/\mathrm{gap}(n)$. Sharpness is attained (in the limit) by test
functions concentrated on $\Hd 4$. The monotonicity of $C(n)$ is immediate
from \eqref{eq:gap}.
\end{proof}

\begin{remark}[The pole at $n=2$]
The factor $(n-2)$ in \eqref{eq:gap} makes $\mathrm{gap}(n)\to0$ and
$C(n)\to\infty$ as $n\to2^+$. This is the analytic fingerprint of the identity
$\IB K=2K$ valid for all origin--symmetric star bodies in the plane: at $n=2$
the degree--four multiplier $\sigma_4=9/(n+1)^2$ reaches $1$, the transverse
contraction is lost, and no rigidity can hold. The perturbative theory thus
recovers the exceptional dimension as a simple pole of the stability constant,
in agreement with the combinatorial mechanism ($B_\infty^n\not\subset 2B_1^n$
iff $n\ge3$) identified in \cite{MSY2025}.
\end{remark}

\section{The nonlinear theory}\label{sec:nonlinear}

The nonlinear argument requires a single Banach space $X$ in which three
properties hold simultaneously: (i) $\Rad$ (equivalently $L$) is bounded, with
the spectral gap of Section~\ref{sec:linear} available; (ii) the harmonic
projections $\Pi_c,\Pi_s$ are bounded, so that the gap can be exploited on the
stable subspace; and (iii) $X$ is a Banach algebra, so that the nonlinearity
$\psi\mapsto\psi^2$ is controlled. The sup--norm space $C(\Sph)$ is a Banach
algebra and $\Rad$ is a contraction on it, but harmonic projections are
\emph{not} bounded on $C(\Sph)$, so (ii) fails there. Conversely $L^2(\Sph)$
has orthogonal (norm--one) projections but is not an algebra. The correct
choice, satisfying all three, is a Sobolev space.

Fix $s>\tfrac{n-1}{2}$ and let $X:=H^s(\Sph)$, with
$\norm f_{H^s}^2=\sum_{d\ge0}(1+\lambda_d)^s\norm{f_d}_{L^2}^2$, where
$\lambda_d=d(d+n-2)$ is the Laplace eigenvalue on $\Hd d$ and $f=\sum_d f_d$
is the harmonic decomposition. Decompose $X=X_0\oplus X_c\oplus X_s$ with
$X_0=\Hd 0$ (dilation gauge), $X_c=\Hd 2$ (ellipsoidal,
$\dim X_c=\tfrac{(n-1)(n+2)}{2}$), and
$X_s=\overline{\bigoplus_{d\ge4}\Hd d}^{\,H^s}$ (stable). Then:
\begin{enumerate}[label=(\roman*)]
\item $\Rad$ is a Fourier multiplier commuting with the Laplacian, so
$\norm{\Rad}_{X\to X}=\sup_d\abs{\mu_d}=1$; likewise $L^2-\mathrm{Id}$ is
diagonal, and Theorem~\ref{thm:linear-stab} holds verbatim in $H^s$ with the
same constant $C(n)$, the $H^s$ weights cancelling in the ratio;
\item $\Pi_c$ and $\Pi_s$ are orthogonal truncations of the defining sum, hence
$\norm{\Pi_c}_{X\to X}=\norm{\Pi_s}_{X\to X}=1$;
\item since $s>\tfrac{n-1}{2}=\tfrac12\dim\Sph$, $H^s(\Sph)$ is a Banach
algebra: there is $C_{n,s}<\infty$ with $\norm{fg}_{H^s}\le C_{n,s}\norm
f_{H^s}\norm g_{H^s}$ \cite[Ch.~4]{Taylor2011}.
\end{enumerate}
The algebra constant $C_{n,s}$ now depends on $n$; this is the price of
property (ii), and it is what obstructs uniform--in--$n$ statements
(Section~\ref{sec:open}). For each fixed $n$ it is a harmless finite constant.

\subsection{Nonlinearity bound}
\begin{lemma}\label{lem:nl-bound}
Let $C_{n,s}$ be the algebra constant of $H^s(\Sph)$. For
$\norm\psi_X\le \big(2C_{n,s}\big)^{-1}$ the nonlinear remainder of
Lemma~\ref{lem:lin} obeys $\norm{N(\psi)}_X\le K_n\norm\psi_X^2$ with
\[
   K_n\;=\;C_{n,s}\binom{n-1}{2}\big(1+C_{n,s}\norm\psi_X\big)^{n-3}
   \;\le\;C_{n,s}\binom{n-1}{2}\Big(\tfrac32\Big)^{n-3}.
\]
\end{lemma}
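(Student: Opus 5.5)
The plan is to bound the bracket $(1+\psi)^{n-1}-1-(n-1)\psi$ in $X=H^s(\Sph)$ term by term. The factor $\Rad$ costs nothing: by property (i) of Section~\ref{sec:nonlinear}, $\norm{\Rad}_{X\to X}=\sup_d\abs{\mu_d}=1$. So it suffices to show
\[
   \Big\|(1+\psi)^{n-1}-1-(n-1)\psi\Big\|_X\;\le\;K_n\norm\psi_X^2 .
\]
Since $n-1$ is a positive integer, the binomial theorem gives the finite expansion
\[
   (1+\psi)^{n-1}-1-(n-1)\psi\;=\;\sum_{k=2}^{n-1}\binom{n-1}{k}\psi^k .
\]
No convergence issue arises, and the case $n=3$ reduces to the single term $\psi^2$.

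Next I bound each power using the algebra property (iii). Induction on $k$ gives $\norm{\psi^k}_X\le C_{n,s}^{\,k-1}\norm\psi_X^k$ for $k\ge1$. With $t:=\norm\psi_X$, the triangle inequality then yields
\[
   \norm{N(\psi)}_X\;\le\; t^2\sum_{k=2}^{n-1}\binom{n-1}{k}C_{n,s}^{\,k-1}t^{k-2}.
\]

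To obtain the closed form $K_n$, I compare binomial coefficients. For $2\le k\le n-1$,
\[
   \binom{n-1}{k}\;=\;\frac{(n-1)(n-2)}{k(k-1)}\binom{n-3}{k-2}\;\le\;\binom{n-1}{2}\binom{n-3}{k-2},
\]
because $k(k-1)\ge 2$. Substituting $j=k-2$, the sum is at most
\[
   C_{n,s}\binom{n-1}{2}\sum_{j=0}^{n-3}\binom{n-3}{j}(C_{n,s}t)^j
   \;=\;C_{n,s}\binom{n-1}{2}\big(1+C_{n,s}t\big)^{n-3},
\]
which is exactly the stated $K_n$. Under the hypothesis $t\le(2C_{n,s})^{-1}$ we have $1+C_{n,s}t\le 3/2$, and this gives the final bound $C_{n,s}\binom{n-1}{2}(3/2)^{n-3}$.

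There is no genuine obstacle here. The only point needing care is the coefficient comparison, which is what lets the messy binomial sum collapse to a clean product with the leading quadratic constant $\binom{n-1}{2}$ factored out. I also need to use the normalization $\norm{\mathbf 1}_{H^s}=1$, which holds for the chosen norm since $\lambda_0=0$. This ensures that no extra constant enters when the constant term $1$ is cancelled in the expansion.
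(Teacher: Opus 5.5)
Your proof is correct and follows the paper's argument: the finite binomial expansion, $\norm{\Rad}_{X\to X}=1$, and the algebra bound $\norm{\psi^k}_X\le C_{n,s}^{k-1}\norm\psi_X^k$. The only difference is the final scalar step: the paper sums the series to $(1+x)^{n-1}-1-(n-1)x$ and uses Taylor's theorem with Lagrange remainder, while your coefficientwise comparison $\binom{n-1}{k}\le\binom{n-1}{2}\binom{n-3}{k-2}$ gives the identical bound, and your closing remark about $\norm{\mathbf 1}_{H^s}=1$ is unnecessary because the constant term cancels identically before any norm is taken.
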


\begin{proof}
By the binomial theorem $(1+\psi)^{n-1}-1-(n-1)\psi=\sum_{k\ge2}\binom{n-1}{k}
\psi^{k}$. Apply $\Rad$ (a contraction on $X$) and the algebra property in the
form $\norm{\psi^k}_X\le C_{n,s}^{\,k-1}\norm\psi_X^k$, and set
$x:=C_{n,s}\norm\psi_X$:
\[
   \norm{N(\psi)}_X
   \;\le\;\frac{1}{C_{n,s}}\sum_{k\ge2}\binom{n-1}{k}x^{k}
   \;=\;\frac{1}{C_{n,s}}\Big[(1+x)^{n-1}-1-(n-1)x\Big].
\]
By Taylor's theorem with Lagrange remainder applied to $t\mapsto(1+t)^{n-1}$,
there is $\xi\in(0,x)$ with
$(1+x)^{n-1}-1-(n-1)x=\binom{n-1}{2}(1+\xi)^{n-3}x^2
\le\binom{n-1}{2}(1+x)^{n-3}x^2$. Substituting $x=C_{n,s}\norm\psi_X$ gives
$\norm{N(\psi)}_X\le C_{n,s}\binom{n-1}{2}(1+C_{n,s}\norm\psi_X)^{n-3}
\norm\psi_X^2$, and $(1+x)^{n-3}\le(3/2)^{n-3}$ on $x\le\tfrac12$.
\end{proof}

The quadratic coefficient $\binom{n-1}{2}\sim n^2/2$, together with the algebra
constant $C_{n,s}$ and the factor $(3/2)^{n-3}$, is the source of the
dimensional restriction on the neighbourhood radius; all are finite for fixed
$n$ but grow with $n$.

\subsection{The center manifold and the ellipsoids}
The linearization $L^2$ of $\IB^2$ has, on $X_c$, multiplier $1$ (center) and,
on $X_s$, spectral radius $\sigma_4=9/(n+1)^2<1$ (stable); the shape map is
smooth on $X=H^s$ since it is a polynomial nonlinearity composed with the
bounded multiplier $\Rad$. By the center--manifold theorem for $C^k$ maps in
Banach spaces \cite{VanderbauwhedeIooss1992,HPS1977}, for each $n$ there is a
neighbourhood of $0$ in $X$ and a $C^k$ map $h:X_c\to X_s$ with $h(0)=0$,
$Dh(0)=0$, whose graph $W^c=\{\,\xi+h(\xi):\xi\in X_c\,\}$ is locally invariant
under $\IB^2$ (modulo $X_0$) and locally attracting with rate $\sigma_4+o(1)$.

\begin{proposition}[Ellipsoids fill the center manifold]\label{prop:center-mfld}
Near the ball, $W^c$ coincides with the family of centered ellipsoids
(modulo dilation). Consequently every point of $W^c$ is a fixed point of the
shape of $\IB^2$.
\end{proposition}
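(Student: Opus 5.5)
The plan is to show that the centered ellipsoids near the ball form, modulo dilation, a $C^k$ submanifold $\mathcal E$ of $X$. This submanifold is tangent to $X_c=\Hd 2$ at $0$, has dimension $\dim X_c=\tfrac{(n-1)(n+2)}{2}$, and consists of fixed points of the shape of $\IB^2$. Once that is in place, the uniqueness properties of the center manifold identify $\mathcal E$ with $W^c$.

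\emph{Step 1: the ellipsoid family is a graph over $X_c$.} Parametrize centered ellipsoids by positive definite symmetric matrices $A=\mathrm{Id}+S$, with radial function $\rho_A(u)=\ip{Au}{u}^{-1/2}$. Modulo dilation we may take $\tr S=0$, which gives a space of dimension $\tfrac{n(n+1)}{2}-1=\dim X_c$. Expanding,
\[
\rho_A(u)=1-\tfrac12\ip{Su}{u}+O(\abs S^2),
\]
and $u\mapsto\ip{Su}{u}$ with $\tr S=0$ is exactly a degree-two harmonic. Moreover $S\mapsto \ip{Su}{u}$ is a linear isomorphism onto $\Hd 2$. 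The map $S\mapsto\rho_A$ is real-analytic into $H^s$, since $\rho_A$ is a smooth function of $u$ depending analytically on $S$. So after projecting out the $X_0$ component, the implicit function theorem writes $\mathcal E$ locally as a graph $\{\xi+g(\xi)\}$ with $g:X_c\to X_s$, $g(0)=0$ and $Dg(0)=0$. The components in odd degrees vanish by evenness, and those in degree $0$ are removed by the quotient.

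\emph{Step 2: invariance and fixed points.} By Theorem~\ref{thm:MSY} (or directly, via the classical identity $\IB^2E=c_E E$ for ellipsoids, which follows from $\IB(AK)=\abs{\det A}\,A^{-T}\IB K$ and $\IB B=\kappa_{n-1}B$), every point of $\mathcal E$ is a fixed point of the shape map of $\IB^2$. In particular $\mathcal E$ is invariant.

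\emph{Step 3: identification.} This is the main obstacle, because center manifolds are in general not unique. I would use the standard characterization that the local center manifold contains every orbit that stays in a small neighbourhood of $0$ for all forward and backward times; equivalently, it contains all bounded full orbits, which is how it is constructed via Lyapunov--Perron with a cut-off. Every point of $\mathcal E$ near $0$ is fixed, so its orbit stays in the neighbourhood, and hence $\mathcal E\subset W^c$. Both sets are graphs over the same neighbourhood in $X_c$, so $g=h$ there, and the two coincide.

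If one prefers to avoid the bounded-orbit characterization, there is an alternative. Take any $\xi+h(\xi)\in W^c$ and compare it with $\xi+g(\xi)\in\mathcal E$. The difference $w=h(\xi)-g(\xi)\in X_s$ is small, and applying $\IB^2$ in the quotient contracts the $X_s$-direction at rate $\sigma_4+o(1)$, using Lemma~\ref{lem:nl-bound} to control the nonlinearity. The $X_c$-component then moves by only $o(\norm w)$, because the multiplier there is $1$ and the points of $\mathcal E$ are fixed. Invariance of both graphs then forces $\norm w\le(\sigma_4+o(1))\norm w$, so $w=0$. Either way the conclusion is $W^c=\mathcal E$ near $0$, and the final sentence of the proposition follows from Step 2.
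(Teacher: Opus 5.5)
Your proposal is correct. Steps 1 and 2 essentially match the paper's proof. Your $GL(n)$-equivariance identity $\IB(AK)=\abs{\det A}\,A^{-T}\IB K$ gives $\IB^2E=cE$ directly; this is the easy direction the paper cites from Theorem~\ref{thm:MSY}.

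Step 3 takes a genuinely different route. The paper never proves that the center manifold produced by the center-manifold theorem equals the ellipsoid family. It observes that the ellipsoids form a locally invariant manifold of fixed points tangent to $X_c$, says this is by definition a local center manifold, and \emph{takes} $W^c$ to be it. It explicitly concedes that center manifolds are not unique and notes that later it only uses the existence of some center manifold made of fixed points.

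You instead prove the identification, in one of two ways:
\begin{itemize}
\item through the reduction property of the Lyapunov--Perron construction: small full orbits, in particular fixed points, lie on $W^c$, and two graphs over the same neighbourhood of $X_c$ must then coincide;
\item through a direct argument that any locally invariant $C^1$ graph tangent to $X_c$ agrees with $\mathcal E$, using that $\mathcal E$ consists of fixed points and that the transverse directions contract.
\end{itemize}
This buys the statement as literally written, and a stronger one. Here the center manifold is locally unique, so the $W^c$ with attraction rate $\sigma_4+o(1)$ used in the proof of Theorem~\ref{thm:nonlinear} really is the ellipsoid family, and the $\infty$-jet caveat becomes unnecessary. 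The paper's route is shorter but delivers only existence.

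In your alternative argument, make two points explicit.
\begin{itemize}
\item One step of $G$ bounds $\norm{h(\xi')-g(\xi)}_X$ at the \emph{new} base point $\xi'$. To return to $\xi$ you need the Lipschitz continuity of $h$ near $0$, together with $\norm{\xi'-\xi}_X=o(\norm{w}_X)$; this is exactly where your remark about the $X_c$-drift is used.
\item Both the contraction and the $o(\norm{w}_X)$ drift need a Lipschitz estimate $\norm{N(p)-N(q)}_X\lesssim(\norm{p}_X+\norm{q}_X)\norm{p-q}_X$ for the nonlinear part. The size bound of Lemma~\ref{lem:nl-bound} is not enough. The Lipschitz estimate follows from the same algebra-property argument applied to $DN(\psi)\phi=(n-1)\Rad\big(((1+\psi)^{n-2}-1)\phi\big)$, and it survives the composition and dilation normalization defining $G$.
\end{itemize}
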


\begin{proof}
The centered ellipsoids near $B$ form a smooth $\dim X_c$--dimensional family:
an ellipsoid $\{x^\top M x\le1\}$ with $M=\mathrm{Id}+sB_0$, $B_0$ traceless
symmetric, has radial function $\rho(u)=(u^\top M u)^{-1/2}
=1-\tfrac{s}{2}(u^\top B_0 u)+O(s^2)$, whose degree--two harmonic component is
$-\tfrac{s}{2}\,\overline{(u^\top B_0 u)}\neq0$ to first order in $s$. Hence the
map $\{$ellipsoids$\}\to X_c$, $E\mapsto \Pi_c(\rho_E-1)$, is a local
diffeomorphism onto a neighbourhood of $0$ in $X_c$. By
Theorem~\ref{thm:MSY} each such ellipsoid satisfies $\IB^2E=cE$, i.e.\ is a
fixed point of the shape map $G$; thus the ellipsoid family is a smooth
$\dim X_c$--dimensional, locally $G$--invariant manifold, consisting entirely of
fixed points, tangent to $X_c$ at the ball. A locally invariant manifold
tangent to the center subspace $X_c$ is, by definition, a local center manifold
for $G$; we take $W^c$ to be the ellipsoid family. (Center manifolds need not be
unique, but any two share the same $\infty$--jet at the ball
\cite{VanderbauwhedeIooss1992}; for the stability argument below only the
existence of \emph{a} center manifold consisting of fixed points is used, which
the ellipsoids provide.)
\end{proof}

\subsection{Proof of Theorem~\ref{thm:nonlinear}}
Let $G$ denote the shape map of $\IB^2$ (i.e.\ $T\circ T$ read modulo $X_0$),
so $G(\psi)=L^2\psi+Q(\psi)$ with $\norm{Q(\psi)}_X\le K_n'\norm\psi_X^2$ on
$\norm\psi_X\le(2C_{n,s})^{-1}$, where $K_n'=O\big(C_{n,s}\,n^2\big)$ is the
quadratic constant of $G$ obtained from Lemma~\ref{lem:nl-bound} by composition.
Introduce the graph coordinate $\eta:=\psi_s-h(\psi_c)$ measuring the distance
of $\psi$ to the center manifold, where $\psi=\psi_c+\psi_s$ is the
$X_c\oplus X_s$ decomposition of the shape of $K$. By
Proposition~\ref{prop:center-mfld}, $\dist(K,\text{ellipsoids})
=\inf_{\lambda}\norm{\rho_K-\lambda\rho_E}_X\asymp\norm\eta_X$ up to a factor
$1+O(\norm\psi_X)$, since the nearest ellipsoid is the base point
$\psi_c\mapsto\psi_c+h(\psi_c)$ of the fibre through $\psi$.

The invariance of $W^c$ gives the conjugated dynamics in graph coordinates:
$\eta\mapsto \Lambda_s\eta + R(\psi_c,\eta)$, where $\Lambda_s=L^2|_{X_s}$ has
$\norm{\Lambda_s}\le\sigma_4$ and the remainder satisfies
$\norm{R(\psi_c,\eta)}_X\le K_n''\,\norm\eta_X\big(\norm{\psi_c}_X
+\norm\eta_X\big)$; crucially $R$ carries \emph{no} term independent of $\eta$,
because $\eta=0$ (the center manifold) is invariant.

We translate the hypothesis into a bound on $G$. Set
$a_n:=\abs{S^{n-2}}/(n-1)$, so that $\rho_{\IB K}=a_n\,T(\rho_K)$ by
\eqref{eq:IB-radial}; since $T$ is homogeneous of degree $n-1$,
$\rho_{\IB^2K}=a_n^{\,n}\,T^2(\rho_K)$. Write $\rho_K=\tau(1+\psi)$ with
$\Pi_0\psi=0$ (this fixes the scale $\tau>0$ and places
$\psi\in X_c\oplus X_s$; it is the choice of representative in the projective
quotient), and set $p:=(n-1)^2$, so that by homogeneity
$T^2(\rho_K)=\tau^{p}\big(1+L^2\psi+Q(\psi)\big)$. Choosing
$c=a_n^{\,n}\tau^{\,p-1}$ (which makes the $X_0$--component of
$c^{-1}\rho_{\IB^2K}-\rho_K$ vanish to first order),
\[
   c^{-1}\rho_{\IB^2 K}-\rho_K
   \;=\;\tau\Big[(L^2-\mathrm{Id})\psi+Q(\psi)+O(\norm\psi^2_X)\Big],
\]
so the hypothesis gives $\norm{G(\psi)-\psi}_X\le\delta'$ with
$\delta'=\delta\,\tau^{-1}=\delta\big(1+o(1)\big)$; all dimensional constants
cancel in the relative normalization. Projecting to the $\eta$--direction
with $\norm{\Pi_s}_X=1$,
\[
   \norm{(\Lambda_s-\mathrm{Id})\eta}_X
   \;\le\;\delta'+\norm{R(\psi_c,\eta)}_X .
\]
Since $\norm{(\Lambda_s-\mathrm{Id})^{-1}}\le C(n)$ by
Theorem~\ref{thm:linear-stab},
\[
   \norm\eta_X\;\le\;C(n)\Big(\delta(1+o(1))
   +K_n''\norm\eta_X\big(\norm{\psi_c}_X+\norm\eta_X\big)\Big).
\]
Choose $r_n>0$ so small that $C(n)K_n''\big(\norm{\psi_c}_X+\norm\eta_X\big)
\le\tfrac12$ whenever $\norm\psi_X\le r_n$; absorbing the last term,
\[
   \norm\eta_X\;\le\;2\,C(n)\,\delta(1+o(1)).
\]
Translating back, $\dist(K,E)\le A_n\delta$ for the nearest ellipsoid $E$,
with $A_n\le 2C(n)(1+o(1))$; more precisely the absorbed factor is
$\big(1-C(n)K_n''(\norm{\psi_c}_X+\norm\eta_X)\big)^{-1}\le
\big(1-C(n)K_n''\,r_n\big)^{-1}$, which tends to $1$ as $r_n\to0$, giving
$A_n\to C(n)$. This is Theorem~\ref{thm:nonlinear}. The smallness
condition used above holds once $\norm\psi_X\le r_n$ with
$r_n=O\big(1/(C(n)K_n'')\big)$; since $C(n)\to1$ while
$K_n''\to\infty$ with $n$, the radius $r_n$ decreases with $n$
(Section~\ref{sec:open}).
\qed

\begin{proof}[Proof of Corollary~\ref{cor:attractor}]
Apply the same decomposition to a single step of $T$ rather than $T\circ T$.
On $X_s$ the linear part $L|_{X_s}$ has spectral radius $\abs{(n-1)\mu_4}
=3/(n+1)$; by the center--manifold reduction the graph coordinate $\eta$
contracts as $\norm{\eta'}_X\le\big(3/(n+1)+o(1)\big)\norm\eta_X$ per
application of $\IB$, while the base point moves within the (invariant)
ellipsoid family. This is the assertion.
\end{proof}

\section{The reduced normal form}\label{sec:normalform}

We record the reduced map on the ellipsoidal directions. We fix once and for
all the normalization: base point $\rho_0\equiv1$ (the unit ball) and
single--step shape map $T(\rho)=\Rad(\rho^{n-1})$, so that $DT(1)=-\mathrm{Id}$
on $X_c$. (The coefficients of a normal form are not invariant under rescaling
the base point or the map; stating the normalization is essential.) Parametrize
$X_c=\Hd2$ by traceless symmetric matrices $B$ via $\psi_c(u)=u^\top B u$
(traceless, so $\tfrac1n\tr B$ drops out). A center--manifold reduction of $T$
gives, to quadratic order, in \emph{every} dimension $n\ge3$,
\begin{equation}\label{eq:normalform}
   \mathcal N(B)\;=\;-B\;-\;\frac{2(n-2)}{n+4}\,
   \Big(B^2-\tfrac1n\tr(B^2)\,\mathrm{Id}\Big)\;+\;O(B^3).
\end{equation}
The leading $-B$ is the flip multiplier of Theorem~\ref{thm:spectrum}, valid in
all dimensions; the quadratic coefficient
$\beta(n)=-2(n-2)/(n+4)=-\tfrac12(n-2)\,\kappa(n)$ arises from the
self--interaction of $\Hd2$ under the $(n-1)$--st power map, projected back to
$\Hd2$ (the projection constant is $\kappa(n)=4/(n+4)$, proved in
Lemma~\ref{lem:kappa} of Appendix~\ref{app:symbolic}) and weighted by the
binomial factor $\binom{n-1}{2}\mu_2=-\tfrac12(n-2)$. Note $\beta(n)\to-2$ as
$n\to\infty$ and $\beta(n)\to0$ as $n\to2^+$, the quadratic self--interaction
vanishing in the exceptional dimension. In dimension three,
$\beta(3)=-\tfrac{2}{7}$.

To cubic order in dimension three the same reduction yields
\begin{equation}\label{eq:normalform3}
   \mathcal N(B)\;=\;-B\;-\;\tfrac{2}{7}\big(B^2-\tfrac13\tr(B^2)\mathrm{Id}\big)
   \;+\;\gamma_3\,\tr(B^2)\,B\;+\;O(B^4)\qquad(n=3),
\end{equation}
for an explicit rational $\gamma_3$. Whatever its value, iterating gives
\begin{equation}\label{eq:square-id}
   \mathcal N\big(\mathcal N(B)\big)\;=\;\big(1+\theta(B)\big)B\;+\;O(B^4),
   \qquad \theta(B)=O(\tr(B^2)),
\end{equation}
so that $\mathcal N^{2}$ is the identity in the projective (shape) quotient to
cubic order. This is the infinitesimal shadow of
Proposition~\ref{prop:center-mfld}: the reduced second iterate fixes every
ellipsoidal direction because the ellipsoids fill the center manifold, so no
period--four shape orbit of $\IB$ bifurcates from the ball. Indeed
\eqref{eq:square-id} holds to \emph{all} orders as an immediate consequence of
Proposition~\ref{prop:center-mfld}; the finite--order computation is recorded
only as an independent check of the reduction and of the sign and size of
$\beta(n)$.

\section{Limitations and open problems}\label{sec:open}

\subsection{Uniformity in the dimension}
The linear constant $C(n)=(n+1)^2/((n-2)(n+4))$ is sharp and satisfies
$C(n)\downarrow1$, so the \emph{linear} theory is not merely uniform but
improves with dimension. The \emph{nonlinear} radius produced by our argument
is not: it is governed by $1/K_n$ with $K_n=O\big(C_{n,s}\,n^2\big)$
(Lemma~\ref{lem:nl-bound}), where $C_{n,s}$ is the Sobolev algebra constant of
$H^s(\Sph)$. Both the binomial factor $\binom{n-1}{2}\sim n^2/2$ and the
algebra constant $C_{n,s}$ grow with $n$, so our radius $r_n$ \emph{shrinks}
with $n$. We emphasize that we do \emph{not} obtain a uniform--in--$n$ stability
theorem, and that the obstruction is intrinsic to the present method: the two
requirements that force the choice $X=H^s$ --- bounded harmonic projections and
the algebra property --- pull against each other quantitatively as $n$ grows.
Whether the true admissible radius is in fact $n$--uniform (as the behaviour of
$C(n)$ might suggest) is an interesting open question that would require a
genuinely different control of the nonlinearity, presumably exploiting the
level--dependent smoothing $\abs{\mu_d}=O(d^{-(n-1)/2})$ of $\Rad$ rather than
the crude bound $\norm{\Rad}\le1$ used here.

\subsection{The global periodic problem}
The perturbative method is intrinsically local and cannot address the existence
of periodic points of $\IB$ far from the ball. Concretely, for $m\ge3$ a
genuine period--$m$ shape orbit would be a tuple of star bodies cyclically
permuted by $\IB$ up to dilation and not individually fixed by any $\IB^j$,
$j<m$; the linearization at the ball is blind to such orbits. Moreover the
linear analysis cannot even distinguish, at any finite order, the period--$2$
case from higher even periods, since the flip multiplier $-1$ on $\Hd2$ makes
every even iterate act as $+1$ there. Resolving $\IB^mK=cK$ for $m\ge3$
therefore appears to require a global, variational argument in the spirit of
\cite{MSY2025}; the self--adjointness of $\Rad$ that makes the functional
$\abs{\IB K}$ tractable for $m=2$ does not obviously survive iterating $\IB$
three or more times, and we regard the identification of a workable functional
as the central difficulty.

\subsection{Lower--order operators}
The same linear analysis applies verbatim to the lower--order intersection
body operators $\IB_i$ of \cite{Gardner1994,LinXiong2025} upon replacing the
Funk multipliers by the corresponding $i$--dependent multipliers; we expect an
analogous stability theorem and an analogous pole at the exceptional dimension.
We have not carried out the details.

\appendix
\section{The quadratic self--interaction coefficient}\label{app:symbolic}

We prove the identity $\kappa(n)=4/(n+4)$ used in \eqref{eq:normalform}. Let
$\sigma$ denote the normalized uniform measure on $\Sph$ and
$\ip fg=\int_{\Sph}fg\,d\sigma$. For a traceless symmetric $B$ put
$q(u)=u^\top Bu$ (a degree--two harmonic) and
$g(u)=u^\top B^2u-\tfrac1n\tr(B^2)$, the harmonic (degree--two) part of
$u^\top B^2u$.

\begin{lemma}[Isotropic moment identities]\label{lem:moments}
For symmetric matrices $A_1,A_2,A_3$,
\begin{align}
\int_{\Sph}(u^\top A_1u)(u^\top A_2u)\,d\sigma
&=\frac{\tr A_1\tr A_2+2\tr(A_1A_2)}{n(n+2)},\label{eq:mom4}\\
\int_{\Sph}\prod_{i=1}^{3}(u^\top A_iu)\,d\sigma
&=\frac{\tr A_1\tr A_2\tr A_3
+2\sum_{\text{cyc}}\tr A_1\tr(A_2A_3)+8\tr(A_1A_2A_3)}{n(n+2)(n+4)}.
\label{eq:mom6}
\end{align}
\end{lemma}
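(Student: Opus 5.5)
We will derive both identities from the Gaussian moments of $\R^n$ and then pass to the sphere using homogeneity. Let $g$ be a standard Gaussian vector in $\R^n$ and write $g=r\,u$ in polar form. Here $r=\abs g$ is independent of $u=g/\abs g$, and $u$ is distributed according to $\sigma$. Each integrand $\prod_i(u^\top A_iu)$ is homogeneous of degree $2k$ in $u$, with $k=2$ or $3$. Hence
\[
\mathbb E\prod_{i=1}^k(g^\top A_ig)=\mathbb E[r^{2k}]\int_{\Sph}\prod_{i=1}^k(u^\top A_iu)\,d\sigma .
\]
Since $r^2$ is $\chi^2_n$, we have $\mathbb E r^{4}=n(n+2)$ and $\mathbb E r^{6}=n(n+2)(n+4)$. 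These are exactly the denominators in \eqref{eq:mom4} and \eqref{eq:mom6}. So it suffices to show that the Gaussian numerators are the stated trace polynomials.

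For the Gaussian side we will use the classical cumulant formula for quadratic forms. The cumulant generating function is
\[
\log\mathbb E\exp\Big(\sum_i t_i g^\top A_ig\Big)=-\tfrac12\log\det\Big(I-2\sum_i t_iA_i\Big)=\sum_{j\ge1}\frac{2^{j-1}}{j}\tr\Big(\big(\textstyle\sum_i t_iA_i\big)^j\Big).
\]
The joint cumulants follow directly from this expansion:
\begin{itemize}
\item $\kappa(A_1)=\tr A_1$;
\item $\kappa(A_1,A_2)=2\tr(A_1A_2)$;
\item $\kappa(A_1,A_2,A_3)=8\tr(A_1A_2A_3)$.
\end{itemize}
For the third cumulant, the coefficient of $t_1t_2t_3$ in $\tfrac{4}{3}\tr\big((\sum t_iA_i)^3\big)$ is $\tfrac43\cdot 6\cdot\tfrac12\big[\tr(A_1A_2A_3)+\tr(A_1A_3A_2)\big]$. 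This equals $8\tr(A_1A_2A_3)$, because $\tr(A_1A_3A_2)=\tr\big((A_1A_2A_3)^\top\big)=\tr(A_1A_2A_3)$ for symmetric matrices.

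We then convert cumulants to moments by summing over set partitions. For two factors this gives $\tr A_1\tr A_2+2\tr(A_1A_2)$, which is \eqref{eq:mom4}. For three factors the partitions contribute:
\begin{itemize}
\item the partition into three singletons gives $\tr A_1\tr A_2\tr A_3$;
\item the three partitions with one pair give $2\sum_{\text{cyc}}\tr A_1\tr(A_2A_3)$;
\item the single block gives $8\tr(A_1A_2A_3)$.
\end{itemize}
This is \eqref{eq:mom6}.

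The only step needing care is the third-cumulant coefficient. Its symmetrization factor and the symmetric-matrix trace reversal are exactly what produce the $8$. As an alternative check, Wick's theorem gives the same count: a pairing of six Gaussian coordinates into a single 3-cycle across the three quadratic forms contributes $\tr(A_1A_2A_3)$, and there are $2^3=8$ such pairings. As a sanity check, setting $A_i=I$ reproduces $\mathbb E r^{2k}$ on both sides.
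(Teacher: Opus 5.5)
Your proof is correct and takes essentially the paper's route: reduce to Gaussian moments via the independent polar decomposition $g=\abs{g}\,u$, divide by $\mathbb E\abs{g}^{2k}=n(n+2)\cdots(n+2k-2)$, and evaluate the Gaussian numerators. The only difference is bookkeeping. You obtain the numerators from the log-det cumulant expansion and the moment--cumulant formula, whereas the paper counts the $3$ and $15$ Wick pairings directly (contributions once, twice, eight times); these are the same computation, since your third cumulant $8\tr(A_1A_2A_3)$ is exactly the eight connected pairings in your own cross-check.
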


\begin{proof}
Let $g$ be a standard Gaussian vector in $\R^n$; then $u=g/\abs g$ is uniform
on $\Sph$ and independent of $\abs g$, so the spherical moments equal the
Gaussian moments divided by $\mathbb E\abs g^{2k}=n(n+2)\cdots(n+2k-2)$. The
Gaussian moments are evaluated by Wick pairings: for \eqref{eq:mom4} the three
pairings contribute $\tr A_1\tr A_2$ once and $\tr(A_1A_2)$ twice; for
\eqref{eq:mom6} the fifteen pairings contribute $\tr A_1\tr A_2\tr A_3$ once,
each $\tr A_i\tr(A_jA_k)$ twice, and $\tr(A_1A_2A_3)$ eight times.
\end{proof}

\begin{lemma}\label{lem:kappa}
Let $B$ be traceless symmetric. The orthogonal projection of $q^2$ onto
$\Hd2$ equals $\kappa(n)\,g$ with
\[
   \kappa(n)\;=\;\frac{\ip{q^2}{g}}{\ip gg}\;=\;\frac{4}{n+4}.
\]
\end{lemma}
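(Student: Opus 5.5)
The plan is to identify the $\Hd2$-component of $q^2$ by duality: pair $q^2$ against every degree-two harmonic and compare with the pairing of a candidate $u^\top Cu$. The only inputs are the moment identities \eqref{eq:mom4}--\eqref{eq:mom6} of Lemma~\ref{lem:moments}.

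First, I record two facts about degree-two harmonics. Every element of $\Hd2$ has the form $u^\top Au$ with $A$ traceless symmetric. Also, on $\Sph$ we have $\abs u=1$, so
\[
   g(u)=u^\top\big(B^2-\tfrac1n\tr(B^2)\,\mathrm{Id}\big)u,
\]
which is a quadratic form with traceless matrix $C_0:=B^2-\tfrac1n\tr(B^2)\mathrm{Id}$. Hence $g\in\Hd2$. The orthogonal projection $P$ of $q^2$ onto $\Hd2$ is then some $u^\top Cu$ with $C$ traceless. It is characterized by
\[
   \ip{q^2}{u^\top Au}=\ip{u^\top Cu}{u^\top Au}\qquad\text{for all traceless symmetric } A .
\]

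Second, I evaluate both sides of this identity with Lemma~\ref{lem:moments}.

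For the left side I use \eqref{eq:mom6} with $A_1=A_2=B$ and $A_3=A$. Every term containing a factor $\tr B$ or $\tr A$ vanishes. The only surviving cyclic term would be $\tr A\,\tr(B^2)$, and it is also zero because $\tr A=0$. This gives
\[
   \ip{q^2}{u^\top Au}=\frac{8\tr(AB^2)}{n(n+2)(n+4)} .
\]
For the right side, \eqref{eq:mom4} with both matrices traceless gives $2\tr(CA)/(n(n+2))$. Equating the two sides for all traceless $A$ yields
\[
   \tr\Big(\big(C-\tfrac{4}{n+4}B^2\big)A\Big)=0\quad\text{for all traceless symmetric } A .
\]
So $C-\tfrac4{n+4}B^2$ is a multiple of $\mathrm{Id}$. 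Since $C$ is traceless, this forces $C=\tfrac{4}{n+4}C_0$, i.e.\ $P=\tfrac4{n+4}\,g$.

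Third, the ratio formula $\kappa(n)=\ip{q^2}{g}/\ip gg$ is then automatic, because $P=\kappa g$ and $q^2-P\perp g$. As a consistency check, one can compute the ratio directly with $A=C_0$. Using $\tr(C_0B^2)=\tr(C_0^2)$, which holds because $C_0$ is traceless, the ratio is
\[
   \frac{8\tr(C_0^2)/(n(n+2)(n+4))}{2\tr(C_0^2)/(n(n+2))}=\frac4{n+4}.
\]
This requires $B\neq0$ and $C_0\neq0$. If $C_0=0$, then $g=0$, and the claim $P=\kappa g$ still holds trivially from the first part.

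The only delicate point is the bookkeeping in \eqref{eq:mom6}: confirming which cyclic terms survive. Everything else is linear algebra.
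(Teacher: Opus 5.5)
Your proof is correct, and it reaches the proportionality to $g$ by a different route from the paper's. The paper obtains $\Pi_{\Hd2}(q^2)\parallel g$ from $O(n)$--equivariance. It asserts, without proof, that every $O(n)$--equivariant quadratic map on traceless symmetric matrices is a multiple of $B\mapsto B^2-\tfrac1n\tr(B^2)\mathrm{Id}$. It then computes the single scalar $\ip{q^2}{g}/\ip gg$ from \eqref{eq:mom4}--\eqref{eq:mom6} with $A_3=B^2$. That step requires subtracting the mean of $u^\top B^2u$ and tracking the common factor $n\tr B^4-(\tr B^2)^2$, which equals $n\tr(C_0^2)$ in your notation.

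You instead pair $q^2$ against an arbitrary traceless $A$. All trace terms in \eqref{eq:mom6} then drop out, and the projection matrix $C=\tfrac4{n+4}C_0$ is determined outright by linear algebra. This has three advantages:
\begin{enumerate}[label=(\roman*)]
\item the invariant--theoretic input is no longer needed;
\item the computation is shorter;
\item you handle the degenerate case $g=0$ correctly. This case occurs for even $n\ge4$, e.g.\ $B=\mathrm{diag}(1,1,-1,-1)$. There the paper's ratio reads $0/0$, but the projection is still $0=\kappa g$.
\end{enumerate}
The paper's route, in exchange, makes the structural reason for the proportionality (symmetry) explicit and reduces the work to evaluating one number.
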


\begin{proof}
That the $\Hd2$--component of $q^2$ is proportional to $g$ follows from
$O(n)$--equivariance: $B\mapsto\Pi_{\Hd2}(q_B^2)$ is an $O(n)$--equivariant
quadratic map from traceless symmetric matrices to traceless symmetric
matrices, and every such map is a scalar multiple of
$B\mapsto B^2-\tfrac1n\tr(B^2)\mathrm{Id}$. It remains to compute the scalar.
By \eqref{eq:mom6} with $A_1=A_2=B$ (traceless), $A_3=B^2$,
\[
\ip{q^2}{u^\top B^2u}
=\frac{2(\tr B^2)^2+8\tr B^4}{n(n+2)(n+4)},
\qquad
\ip{q^2}{\mathbf 1}=\frac{2\tr B^2}{n(n+2)}
\]
(the latter by \eqref{eq:mom4}), whence
\[
\ip{q^2}{g}=\ip{q^2}{u^\top B^2u}-\frac{\tr B^2}{n}\,\ip{q^2}{\mathbf 1}
=\frac{8\big(n\tr B^4-(\tr B^2)^2\big)}{n^2(n+2)(n+4)} .
\]
Similarly, by \eqref{eq:mom4} with $A_1=A_2=B^2$ and
$\ip{u^\top B^2u}{\mathbf 1}=\tr B^2/n$,
\[
\ip gg=\frac{2\tr B^4+(\tr B^2)^2}{n(n+2)}-\frac{(\tr B^2)^2}{n^2}
=\frac{2\big(n\tr B^4-(\tr B^2)^2\big)}{n^2(n+2)} .
\]
Dividing, $\kappa(n)=\dfrac{8}{n^2(n+2)(n+4)}\cdot\dfrac{n^2(n+2)}{2}
=\dfrac{4}{n+4}$.
\end{proof}

The remaining finite computations reported in the text --- the multiplier
identities \eqref{eq:mu2m}, the monotonicity ratio
$\sigma_{2m+2}/\sigma_{2m}=\big((2m+1)/(n+2m-1)\big)^2$, and the cubic
normal--form reduction \eqref{eq:normalform3} in dimension three --- were
additionally verified symbolically (\texttt{SymPy}) and, for the Funk
multipliers, by independent high--precision quadrature over great subspheres.

\end{document}